\numberwithin{equation}{subsection}
\newtheorem{thm}{Theorem}
\newtheorem{proposition}[equation]{Proposition}
\newtheorem{lemma}[equation]{Lemma}
\theoremstyle{definition}
\newtheorem{rmk}[equation]{Remark}
\newtheorem{eg}[equation]{Example}
\newtheorem{defn}[equation]{Definition}
\renewcommand{\thesubsection}{%
  \ifnum\c@subsection<1 \@arabic\c@section
  \else \thesection.\@arabic\c@subsection
  \fi
}
\newcommand{\fg}{\mathfrak{g}}
\newcommand{\ft}{\mathfrak{t}}
\title{An Explicit Determination of the Springer Morphism}
\author{Sean Rogers}
\date{October 2016}
\begin{document}

\maketitle
\begin{abstract}
    Let $G$ be a simply connected semisimple algebraic groups over $\mathbb{C}$ and let $\rho :G\rightarrow GL(V_\lambda)$ be an irreducible representation of $G$ of highest weight $\lambda$. Suppose that $\rho$ has finite kernel.Springer defined an adjoint-invariant regular map with Zariski dense image from the group to the Lie algebra, $\theta_\lambda:G\rightarrow\fg$, which depends on $\lambda$ [BP,\S 9]. By a lemma in [Kum] $\theta_\lambda$ takes the maximal torus to its Lie algebra $\ft$. Thus, for a given simple group $G$ and an irreducible representation $V_\lambda$, one may write $\theta_\lambda (t)=\sum\limits_{i=1}^n c_i(t)\check{\alpha_i}$, where we take the simple coroots $\{\check{\alpha_i}\}$ as a basis for $\ft$. We give a complete determination for these coefficients $c_i(t)$ for any simple group $G$ as a sum over the weights of the torus action on $V_\lambda$.
\end{abstract}
\section{Introduction}
Let $G$ be a connected reductive algebraic group over $\mathbb{C}$ with Borel subgroup $B$ and maximal torus $T\subset B$ of rank $n$ with character group $X^*(T)$. Let $P$ be a standard paraoblic subgroup with Levi subgroup $L$ containing $T$. Let $W$ (resp. $W_L$) be the Weyl group of $G$ (resp. $L$). Let $V_\lambda$ be an irreducible almost faithful representation of $G$ with highest weight $\lambda$, i.e. $\lambda$ is a dominant integral weight and the corresponding map $\rho_\lambda :G\rightarrow Aut(V_\lambda)$ has finite kernel. Then, Springer defined an adjoint-invariant regular map with Zariski dense image from the group to its Lie algebra, $\theta_\lambda:G\rightarrow\fg$, which depends on $\lambda$ (Sect. 2.1). 

In recent work by Kumar [Kum], the Springer morphism is used in a crucial way to extend the classical result relating the polynomial representation ring of the general linear group $GL_r$ and the singular cohomology ring $H^*(Gr(r,n))$ of the Grassmanian of r-planes in $\mathbb{C}^n$ to the Levi subgroups of any reductive group $G$ and the cohomology of the corresponding flag varieties $G/P$. Computing $\theta_\lambda|_T$ is integral to this process. By a  lemma in [Kum], $\theta_\lambda$ takes the maximal torus $T$ to its Lie algebra $\ft$, thus inducing a $\mathbb{C}$-algebra homomorphism $(\theta_\lambda|_T)^*:\mathbb{C}[\ft]\rightarrow \mathbb{C}[T]$ between the corresponding affine coordinate rings. The Springer morphism is adjoint invariant and thus $(\theta_\lambda|_T)^*$ takes $\mathbb{C}[\ft]^{W_L}$ to $\mathbb{C}[T]^{W_L}$. One can then define the $\lambda-polynomial\ subring\ Rep^\mathbb{C}_{\lambda-poly}(L)$ to be the image of $\mathbb{C}[\ft]^{W_L}$ under $(\theta_\lambda|_T)^*$ (as $Rep^\mathbb{C}(L)\simeq \mathbb{C}[T]^{W_L}$). This leads to a surjective $\mathbb{C}$-algebra homomorphism $\xi^P_\lambda:Rep^\mathbb{C}_{\lambda-poly}(L)\rightarrow H^*(G/P,\mathbb{C})$, as in [Kum]. The aim of this work is to compute $\theta_\lambda|_T$ in a uniform way for all simple algebraic groups $G$ and any dominant integral weight $\lambda$.

As $\theta_\lambda|_T$ maps $T$ into $\ft$, we have that for a given simple group $G$ and an irreducible representation $V_\lambda$, one may write

$$\theta_\lambda (t)=\sum\limits_{i=1}^n c_i(\lambda)\check{\alpha_i}$$

, where we take the simple coroots $\{\check{\alpha_i}\}$ as a basis for $\ft$. We give a complete determination for these coefficients $c_i(t)$ for any simple, simply-connected algebraic group $G$ as a sum over the weights of the torus action on $V_\lambda$. For a given representation $V_\lambda$, let $\Lambda_\lambda$ be the set of weights appearing in the weight space decomposition of $V_\lambda=\bigoplus V_\mu$, listed with multiplicity. Let $\omega_1,...,\omega_n$ be the fundamental weights in $\ft^*$, and consider the weights $\mu\in\Lambda_\lambda$ written in the fundamental weight basis, i.e. $\mu=(\mu_1,...,\mu_n)=\mu_1\omega_1+...+\mu_n\omega_n$. Let $e^\mu(t)\in X^*(T)$ be the corresponding character of $T$. Then we find (Sect. 3) that,

\begin{thm} The coefficients $c_i(t)$ are determined by the following set of equations.
$$\begin{pmatrix}
\sum\limits_{\mu\in\Lambda_\lambda}\mu_1\cdot e^\mu (t)\\
\vdots\\
\sum\limits_{\mu\in\Lambda_\lambda}\mu_n\cdot e^\mu(t) \end{pmatrix}=
S(G,\lambda)
 \begin{pmatrix}
 c_1(t)\\c_2(t)\\ \vdots \\ c_n(t)
 \end{pmatrix},$$
 where $S(G,\lambda)=\{\sum\limits_{\mu\in\Lambda_\lambda}\mu_i\mu_j\}_{ij}$.
 \end{thm}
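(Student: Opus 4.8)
The plan is to unwind the definition of $\theta_\lambda$ from Section 2.1 on the torus and reduce the claim to the defining equation of an orthogonal projection. Recall that $\theta_\lambda$ is built from the trace form $\langle A,B\rangle=\operatorname{tr}(AB)$ on $\fgl(V_\lambda)$: since $\ker\rho_\lambda$ is finite, $d\rho_\lambda$ embeds $\fg$ as a subspace of $\fgl(V_\lambda)$ on which this form is nondegenerate, and $\theta_\lambda(g)$ is (the preimage under $d\rho_\lambda$ of) the orthogonal projection of $\rho_\lambda(g)\in\fgl(V_\lambda)$ onto $d\rho_\lambda(\fg)$. First I would fix an orthonormal weight basis $\{v_\mu\}_{\mu\in\Lambda_\lambda}$ of $V_\lambda$, indexed by $\Lambda_\lambda$ with multiplicity, so that for $t\in T$ the operator $\rho_\lambda(t)$ is diagonal with entries $e^\mu(t)$, while for $H\in\ft$ the operator $d\rho_\lambda(H)$ is diagonal with entries $\mu(H)$.

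Next I would record the characterizing property of the projection: $\theta_\lambda(t)$ is the unique element of $\fg$ with $\langle d\rho_\lambda(\theta_\lambda(t)),\,d\rho_\lambda(H)\rangle=\langle\rho_\lambda(t),\,d\rho_\lambda(H)\rangle$ for all $H\in\fg$. Testing against an $H$ lying in a root space $\fg_\alpha$ makes the right-hand side vanish, because $\rho_\lambda(t)$ is diagonal whereas $d\rho_\lambda(\fg_\alpha)$ shifts weights by $\alpha\neq 0$ and hence has zero diagonal; this is precisely the concrete incarnation of the [Kum] lemma guaranteeing $\theta_\lambda(t)\in\ft$. It therefore suffices to impose the projection identity for $H=\check{\alpha_j}$, $j=1,\dots,n$, which form a basis of $\ft$.

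Writing $\theta_\lambda(t)=\sum_i c_i(t)\check{\alpha_i}$ and evaluating the two diagonal traces then gives, on the right, $\langle\rho_\lambda(t),d\rho_\lambda(\check{\alpha_j})\rangle=\sum_{\mu\in\Lambda_\lambda}e^\mu(t)\,\mu(\check{\alpha_j})$, and on the left $\sum_i c_i(t)\sum_{\mu\in\Lambda_\lambda}\mu(\check{\alpha_i})\,\mu(\check{\alpha_j})$. The single nontrivial input is the pairing $\mu(\check{\alpha_i})=\langle\mu,\check{\alpha_i}\rangle=\mu_i$, which is just the duality between the fundamental weights $\{\omega_i\}$ and the simple coroots $\{\check{\alpha_i}\}$; substituting it turns the right-hand side into $\sum_\mu\mu_j\,e^\mu(t)$ and the left-hand side into $\sum_i\big(\sum_\mu\mu_i\mu_j\big)c_i(t)=\sum_i S(G,\lambda)_{ji}\,c_i(t)$. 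Since $S(G,\lambda)$ is symmetric, collecting the $n$ scalar equations indexed by $j$ assembles exactly the asserted matrix identity.

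The computation is short once the setup is in place, so the only real obstacle is bookkeeping about normalization: I must confirm that the trace form used to define $\theta_\lambda$ in Section 2.1 is the same form implicitly used to read off $\theta_\lambda(t)$ in the coroot basis, so that no spurious scalar enters the projection identity. Provided $\theta_\lambda$ is the projection with respect to $\operatorname{tr}(AB)$ (equivalently, the representation-theoretic trace form), the identity $\langle d\rho_\lambda(\theta_\lambda(t)),d\rho_\lambda(H)\rangle=\langle\rho_\lambda(t),d\rho_\lambda(H)\rangle$ holds with no constants to track, and the argument closes.
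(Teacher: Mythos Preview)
Your proposal is correct and follows essentially the same approach as the paper. The paper writes $\rho_\lambda(t)=\sum_j c_j(t)\,d\rho_\lambda(\check{\alpha}_j)+X(t)$ with $X(t)\in d\rho_\lambda(\fg)^\perp$, compares diagonal entries, and then multiplies the $i$th equation by $\mu_j^i$ and sums to eliminate the $x_{ii}$; but multiplying by $\mu_j^i$ and summing is exactly computing $\operatorname{tr}(\,\cdot\,d\rho_\lambda(\check{\alpha}_j))$, so this is your projection identity $\langle d\rho_\lambda(\theta_\lambda(t)),d\rho_\lambda(\check{\alpha}_j)\rangle=\langle\rho_\lambda(t),d\rho_\lambda(\check{\alpha}_j)\rangle$ written out coordinate by coordinate, and the paper's remark that root vectors act off-diagonally matches your justification that $\theta_\lambda(t)\in\ft$.
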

 Our main result (Sect. 4) determines that
 \begin{thm}The above matrix
 
 $$S(G,\lambda):=\{\sum\limits_{\mu\in\Lambda_\lambda}\mu_i\mu_j\}_{ij}=(\frac{1}{2}\sum\limits_{\mu\in\Lambda_\lambda} \mu_i^2) S\ ,$$
 
  where $S$ is a symmetrization of the Cartan matrix $A$ for $G$, and $\mu_i$ is the coordinate of the fundamental weight corresponding to a long root (or in the simply-laced case any root).
 \end{thm}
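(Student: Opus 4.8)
The plan is to recognize the matrix $S(G,\lambda)$ as the restriction to the Cartan subalgebra of the \emph{trace form} of $V_\lambda$, and then to exploit the essential uniqueness of invariant symmetric bilinear forms on a simple Lie algebra. First I would rewrite the entries. Since $\langle\omega_j,\check{\alpha}_i\rangle=\delta_{ij}$, the coordinate $\mu_i$ of a weight $\mu=\sum_j\mu_j\omega_j$ is precisely $\mu_i=\langle\mu,\check{\alpha}_i\rangle$, i.e.\ the eigenvalue by which $d\rho_\lambda(\check{\alpha}_i)$ acts on the weight space $V_\mu$. Summing over $\Lambda_\lambda$ with multiplicity therefore gives
$$ S(G,\lambda)_{ij}=\sum_{\mu\in\Lambda_\lambda}\mu_i\mu_j=\sum_{\mu\in\Lambda_\lambda}\mu(\check{\alpha}_i)\,\mu(\check{\alpha}_j)=\mathrm{tr}_{V_\lambda}\!\big(d\rho_\lambda(\check{\alpha}_i)\,d\rho_\lambda(\check{\alpha}_j)\big). $$
Thus $S(G,\lambda)_{ij}=B_\lambda(\check{\alpha}_i,\check{\alpha}_j)$, where $B_\lambda(X,Y):=\mathrm{tr}_{V_\lambda}(d\rho_\lambda(X)\,d\rho_\lambda(Y))$ is a symmetric $\mathrm{ad}$-invariant bilinear form on $\fg$ (invariance being cyclicity of the trace).

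Next I would invoke rigidity. Because $G$ is simple, $\fg$ is a simple Lie algebra, whose space of $\mathrm{ad}$-invariant symmetric bilinear forms is one-dimensional; equivalently, the reflection action of $W$ on $\ft$ is absolutely irreducible, so a $W$-invariant quadratic form on $\ft$ is unique up to scale. As $\rho_\lambda$ is almost faithful, $d\rho_\lambda$ is injective and $B_\lambda\neq 0$. Hence $B_\lambda=I_\lambda\,B_0$ for a single nonzero scalar $I_\lambda$ (the Dynkin index) and a fixed normalized invariant form $B_0$, so that $S(G,\lambda)_{ij}=I_\lambda\,B_0(\check{\alpha}_i,\check{\alpha}_j)$. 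It then remains only to identify $B_0$ on coroots and to pin down $I_\lambda$.

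Normalizing $B_0$ so that the induced form $(\cdot,\cdot)$ on $\ft^*$ assigns long roots squared length $2$, and using that $\check{\alpha}_i\mapsto 2\alpha_i/(\alpha_i,\alpha_i)$ under the identification $\ft\cong\ft^*$, I obtain $B_0(\check{\alpha}_i,\check{\alpha}_j)=4(\alpha_i,\alpha_j)/\big((\alpha_i,\alpha_i)(\alpha_j,\alpha_j)\big)$, which is obtained from the Cartan matrix $A$ by multiplication by a positive diagonal matrix and is therefore exactly a symmetrization $S$ of $A$. Finally, for a long simple root ($(\alpha_i,\alpha_i)=2$) one has $S_{ii}=B_0(\check{\alpha}_i,\check{\alpha}_i)=2$; evaluating the established proportionality on this diagonal entry forces $I_\lambda=\tfrac12\,S(G,\lambda)_{ii}=\tfrac12\sum_{\mu\in\Lambda_\lambda}\mu_i^2$, which yields the claimed identity $S(G,\lambda)=\big(\tfrac12\sum_\mu\mu_i^2\big)S$. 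In the simply-laced case every simple root is long, so any index may be used.

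The step requiring the most care—rather than the most depth—is the normalization bookkeeping of the last paragraph: fixing $B_0$ so that $S$ is precisely the advertised symmetrization, and verifying that reading $I_\lambda$ off a long-root diagonal entry is legitimate. In particular one must flag that $\tfrac12\sum_\mu\mu_i^2$ is genuinely independent of which long simple root $i$ is chosen—a consistency guaranteed \emph{a posteriori} by the proportionality itself, but worth isolating, since for a short root $S_{ii}=4/(\alpha_i,\alpha_i)\neq 2$ and the corresponding diagonal entry reports a different constant. Everything else (the trace-form identity and the appeal to uniqueness of invariant forms) is standard once the reinterpretation in the first paragraph is made.
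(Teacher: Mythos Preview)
Your argument is correct and is genuinely different from the paper's. The paper proceeds entry by entry: it uses the $W$-invariance of $\Lambda_\lambda$ to apply individual simple reflections $s_i$ (and products like $s_js_i$) and read off identities among the sums $\sum_\mu \mu_i\mu_j$. Two lemmas handle the simply-laced situation (zero entries when $A_{ij}=0$; equal diagonal entries and the $-\tfrac12$ relation for adjacent same-length nodes), and then the types $B_n,C_n,F_4$ and $G_2$ are treated by separate explicit computations of $[s_i(\mu)]_j$ at the short/long interface. Your route, by contrast, is uniform and conceptual: you identify $S(G,\lambda)_{ij}=\mathrm{tr}_{V_\lambda}\!\big(d\rho_\lambda(\check{\alpha}_i)\,d\rho_\lambda(\check{\alpha}_j)\big)$ as the Gram matrix of the trace form $B_\lambda$ in the simple-coroot basis, invoke the one-dimensionality of $\mathrm{ad}$-invariant symmetric forms on a simple $\fg$ to get $B_\lambda=I_\lambda B_0$, and then match $B_0(\check{\alpha}_i,\check{\alpha}_j)=4(\alpha_i,\alpha_j)/\big((\alpha_i,\alpha_i)(\alpha_j,\alpha_j)\big)$ with the paper's explicit $S$, fixing $I_\lambda$ on a long-root diagonal entry. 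What the paper's approach buys is complete elementarity---nothing beyond Weyl-group invariance of the weight multiset is used, and one sees concretely which reflection forces which relation. What your approach buys is a case-free proof that also explains \emph{why} the answer is a symmetrized Cartan matrix (it is the coroot Gram matrix of the normalized invariant form) and identifies the scalar $\tfrac12\sum_\mu\mu_i^2$ as the Dynkin index of $V_\lambda$, automatically yielding the independence of the choice of long simple root that the paper establishes piecemeal.
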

 In particular, for the simply-laced groups $S(G,\lambda)=(\frac{1}{2}\sum\limits_{\mu\in\Lambda_\lambda}\mu_1^2) A$. The determination of $S(G,\lambda)$ relies on the fact that $\Lambda_\lambda$ is invariant under the action of the Weyl group $W$, and moreover that if $\sigma\in W$ then $dim(V_\mu)=dim(V_{\sigma.\mu}).$

\section{Preliminaries}
Let $G$ be a simply-connected semi-simple algebraic group over $\mathbb{C}$, with Lie algebra $\fg=\ft\oplus\bigoplus\limits_\alpha \fg_\alpha$ of rank $n$, and  fixed base of simple roots $\Delta=\{\alpha_j\}$. Take the set of simple co-roots $\check{\Delta}=\{\check{\alpha}_j\}$ as a basis for the Cartan subalgebra $\ft\subset \fg$. Then $\ft_\mathbb{Z} =\bigoplus\limits_{j=1}^{n} \mathbb{Z}\check{\alpha}_j$ is the co-root lattice. Further, the weight lattice is $\ft_{\mathbb{Z}}^*=\bigoplus\limits_{i=1}^{n} \mathbb{Z}\omega_i$, where $\omega_i \in \ft^*$ is the $i^{th}$ fundamental weight of $\fg$ defined by $\omega_i (\check{\alpha}_j)=\delta_{ij}$. Then the maximal torus $T\subset G$ (with Lie algebra $\ft$) can be identified with $T=Hom_{\mathbb{Z}}(\ft_{\mathbb{Z}}^*,\mathbb{C}^*)$ as in [Kum2]. Finally, let $W$ be the Weyl group of $G$, generated by the simple reflections $s_i$. So for $\mu\in\ft^*$, $s_i(\mu)=\mu -\mu(\check{\alpha_i})\alpha_i$.

Let $V_\lambda$ be the irreducible representation of $G$ with highest weight $\lambda$. Then $V_\lambda$ has weight space decomposition
$$V_\lambda=\bigoplus V_{\mu}$$
where $V_{\mu_1,\mu_2,...,\mu_n}=\{v\in V_\lambda |\ t.v=((\mu_1\omega_1 +...+\mu_n\omega_n)(t))v\ \forall v\in V_\lambda\}$ is the weight space with weight $\mu=\mu_1\omega_1 +...+\mu_n\omega_n$.

So for $t\in T$ and $v\in V_{\mu_1,\mu_2,...,\mu_n}$ we have that the action of $t$ on $v$ is given by  
$$t.v=t(\mu_1,...,\mu_n)v=e^\mu(t)v$$ 
where $(\mu_1,...\mu_n)=\mu_1\omega_1 +...+\mu_n\omega_n$. Additionally $\check{\alpha}_j \in \ft$ acts on $v$ by 
$$\check{\alpha}_j.v=(\mu_1\omega_1 +...+\mu_n\omega_n)(\check{\alpha_j})v=\mu_jv.$$

\subsection{Springer Morphism}
For a given almost faithful irreducible representation $V_\lambda$ of $G$ we define the Springer morphism as in  [BP]
$$\theta_\lambda :G\rightarrow \fg$$ given by 

$$\xymatrix{
G \ar[r] \ar[dr]^{\theta_{\lambda}} &Aut(V(\lambda))\subset 
 End(V(\lambda))=\fg\oplus\fg^{\bot}\ar[d]^{\pi} \\
&\fg}$$

where $\fg$ sits canonically inside $End(V_\lambda)$ via the derivative $d\rho_\lambda$, the orthogonal complement $\fg^{\bot}$ is taken via the adjoint invariant form $<A,B>=tr(AB)$ on $End(V_\lambda)$, and $\pi$ is the projection onto the $\fg$ component. Note, that since $\pi\circ d\rho_\lambda$ is the the identity map, $\theta_\lambda$ is a local diffeomorphism at 1. Since the decomposition $End(V_\lambda)=\fg\oplus\fg^{\bot}$ is G-stable, $\theta_\lambda$ is invariant under conjugation in $G$. Importantly, $\theta_\lambda$ restricts to $\theta_{\lambda|T}:T\mapsto \ft$. [Kum]

\section{General Case}
Let $V_\lambda$ be a $d$ dimensional almost faithful irreducible representation of $G$ of highest weight $\lambda$. Let $\Lambda_\lambda=\{(\mu_1^i,...,\mu_n^i)\}_{i=1}^d$ be an enumeration of the set of weights considered with their multiplicity that appear in the weight space decomposition of $V_\lambda$ (so $\mu_j^i$ is the coordinate of the $j^{th}$ fundamental weight for the $i^{th}$ weight in the decomposition)  Then  we can take a basis of weight vectors $\{v_{\mu_1^i,...,\mu_n^i}\}_{i=1}^{d}$ on which the torus $T$ and each simple co-root acts diagonally. Thus, 

$$\rho_\lambda(t)=diag\{e^{\mu^1}(t),...,e^{\mu^d}(t)\}\in Aut(V_\lambda)$$ 

and for a simple co-root $\check{\alpha}_j$ we have that

$$d\rho_\lambda(\check{\alpha}_j)=diag\{\mu_j^1,...,\mu_j^d\}\in End(V_\lambda).$$

To take the projection we calculate $d\rho_\lambda(\fg)^{\bot}\in End(V_\lambda)$ with respect to the symmetric bilinear form $tr(AB)$. So letting $X=(x_{ij})$ be a $d\times d$ matrix in $End(V_\lambda)$ we have that for any co-root $\check{\alpha}_j\in\ft$ we require that 

   $$ tr(d\rho_\lambda(\check{\alpha}_j)\cdot X)=0 \implies \sum\limits_{i=1}^{d} \mu_j^i x_{ii}=0$$
   in order for $X\in d\rho_\lambda(\fg)^{\bot}.$

So $\sum\limits_{\Lambda_\lambda}\mu_1^i x_{ii}=\sum\limits_{\Lambda_\lambda}\mu_2^i x_{ii}=...=\sum\limits_{\Lambda_\lambda}\mu_n^i x_{ii}=0$. Now to project $\rho_\lambda(t)$ onto $d\rho_\lambda(\ft)$ we write $\rho_\lambda$ as a sum

$$\rho_\lambda(t)=\sum\limits_{j=1}^{n} c_j(t) d\rho_\lambda(\check{\alpha}_j) + X(t).$$

where $c_j:T\mapsto\mathbb{C}$ is a function that depends on $\lambda$, and $X(t)\in d\rho_\lambda(\fg)^{\bot}$. It follows then that 
$$\theta_\lambda(t)=\sum c_j(t)\check{\alpha_j}$$

So we aim to solve for the coefficients $c_j(t)$. Note that for the root space $\fg_\alpha$, we have that $\fg_\alpha .V_\mu \subset V_{\mu +\alpha}$. Thus, $d\rho_\lambda(e_\alpha)$ for $e_\alpha\in\fg_\alpha$ will only have off diagonal entries, and as such the condition $tr(d\rho_\lambda(e_\alpha)\cdot X)=0$ will only add constraints to the off diagonal entries of $X\in d\rho_\lambda(\fg)^{\bot}$.  As the action of $t$ and $\check{\alpha_j}$ are both diagonal, by comparing coordinates we have the following set of $d$ equations

$$e^{\mu^1}(t)=c_1(t) \mu_1^1+...+c_n(t) \mu_n^1+x_{11}$$
$$e^{\mu^2}(t)=c_1(t) \mu_1^2+...+c_n(t) \mu_n^2+x_{22}$$
$$\vdots$$
$$e^{\mu^d}(t)=c_1(t) \mu_1^d+...+c_n(t) \mu_n^d+x_{dd}.$$

This can be reduced to $n$ equations by utilizing the fact that $\sum\limits_{i=1}^{d} \mu_j^i x_{ii}=0$, as follows. Multiply each equation above by $\mu_1^i$ and sum (then repeat with $\mu_2^i,...,\mu_n^i)$

$$\sum\limits_{i=1}^d \mu_1^i e^{(\mu_1^i,...,\mu_n^i)}(t)=\sum\limits_{i=1}^d (\mu_1^i)^2 c_1(t) +\sum\limits_{i=1}^d \mu_1^i \mu_2^i c_2(t)+...+\sum\limits_{i=1}^d \mu_1^i \mu_n^i c_n(t)$$
$$\vdots$$
$$\sum\limits_{i=1}^d \mu_n^i e^{(\mu_1^i,...,\mu_n^i)}=\sum\limits_{i=1}^d \mu_1^i \mu_n^i c_1(t) +\sum\limits_{i=1}^d \mu_2^i \mu_n^i c_2(t)+...+\sum\limits_{i=1}^d (\mu_n^i)^2 c_n(t)$$

More cleanly this can be written as 
$$\begin{pmatrix}
\sum\limits_{\Lambda_\lambda}\mu_1\cdot e^{\mu}(t)\\
\vdots\\
\sum\limits_{\Lambda_\lambda}\mu_n\cdot e^{\mu}(t) \end{pmatrix}=
S(G,\lambda)
 \begin{pmatrix}
 c_1(t)\\c_2(t)\\ \vdots \\ c_n(t)
 \end{pmatrix}$$
 
 where
 $$S(G,\lambda):=\begin{pmatrix}
\sum\limits_{\Lambda_\lambda} \mu_1\cdot \mu_1 & \sum\limits_{\Lambda_\lambda} \mu_1\cdot \mu_2 &... & \sum\limits_{\Lambda_\lambda} \mu_1\cdot \mu_n\\
\sum\limits_{\Lambda_\lambda} \mu_1\cdot \mu_2 & \sum\limits_{\Lambda_\lambda} \mu_2\cdot \mu_2 &... & \sum\limits_{\Lambda_\lambda} \mu_2\cdot \mu_n\\
 \vdots &  \ddots & & \vdots\\
 \sum\limits_{\Lambda_\lambda} \mu_1\cdot \mu_n&... &  \sum\limits_{\Lambda_\lambda} \mu_{n-1}\cdot \mu_n & \sum\limits_{\Lambda_\lambda} \mu_n\cdot \mu_n \end{pmatrix}$$
 
 Then, we have that 
 $$\begin{pmatrix}
 c_1(t)\\c_2(t)\\ \vdots \\ c_n(t)
 \end{pmatrix}=S^{-1}(G,\lambda)
 \begin{pmatrix}
 \sum\limits_{\Lambda_\lambda} \mu_1 e^\mu (t)\\
\vdots\\
 \sum\limits_{\Lambda_\lambda}\mu_n e^\mu (t)
 \end{pmatrix}$$

  In the next section we calculate the matrix $S(G,\lambda)$ for the classical and exceptional simple algebraic groups. In the following sections, we continue the notation
 $$\Lambda_\lambda=\{(\mu_1,...\mu_n)|\ \mu_1\omega_1+...+\mu_n\omega_n\ is\ a\ weight\ of V_\lambda\}$$
 counted with multiplicity.
 
 \section{Main Result}
 Our main result will be calculating the matrix $S(G,\lambda)$ as defined in section 3, for the simple algebraic groups.  We use the convention that the Cartan matrix associated to the root system of $\fg$ is $A=(A_{ij})$, where $A_{ij}=\alpha_i(\check{\alpha_j})$. Then $A$ is a change-of-basis matrix for $\ft^*$ between the fundamental weights and the simple roots. Furthermore, $A$  satisfies the following properties 
 \begin{itemize}
     \item For diagonal entries $A_{ii}=2$
     \item For non-diagonal entries $A_{ij}\leq 0$
     \item $A_{ij}=0$ iff $A_{ji}=0$
     \item $A$ can be written as $DS$, where $D$ is a diagonal matrix, and $S$ is a symmetric matrix. 
 \end{itemize}
  Let $D$ be the diagonal matrix defined by $D_{ij}=\frac{\delta_{ij}}{2}(\alpha_i,\alpha_j)$, where if we realize the root system $R$ associated to $\fg$ as a set of vectors in a Euclidean space $E$, then $(\cdot,\cdot)$ is the standard inner product. In this framework we can write $A_{ij}=\alpha_i(\check{\alpha_j})=\frac{2(\alpha_i,\alpha_j)}{(\alpha_j,\alpha_j)}$ Then, writing $A=DS$, we find that the matrix $S$ has coordinate entries given by
  
  $$S_{ij}=\frac{4(\alpha_i,\alpha_j)}{(\alpha_i,\alpha_i)(\alpha_j,\alpha_j)}$$
  and is clearly symmetric.
  
  $(\cdot,\cdot)$ is an invariant bilinear form on $\ft^*$, normalized so that so that $(\alpha_i,\alpha_i)=2$ where $\alpha_i$ is the highest root. Note that under this formulation, if $G$ is of simply-laced type then $D$ is the identity matrix and $S$ is the Cartan matrix. We find that in general for a given simple group $G$ that $S(G,\lambda)$ is a multiple of $S$. Before stating our result precisely we fix the following notation. If $\alpha_j$ is any long simple root (for the simply laced case $\alpha_j$ can be any simple root), consider the corresponding fundamental weight $\omega_j$. Let $x_j(\lambda):=\sum\limits_{\mu\in\Lambda_\lambda}\mu_j^2$, where $\mu_j$ is the $j^{th}$ coordinate of the weight $\mu\in\Lambda_\lambda$ in the fundamental weight basis.
  
  \begin{proposition}
   Let $G$ be a simple algebraic group. Let $S(G,\lambda)$ be defined as in section 3. Set $x_j(\lambda):=\sum\limits_{\mu\in\Lambda_\lambda}\mu_j^2$ for a long root $\alpha_j$. This is independent of the choice of long root $\alpha_j$. Let $S$ be a symmetrization of the Cartan matrix as above. Then $S(G,\lambda)$ is a multiple of $S$. More precisely, 
   
       $$S(G,\lambda)=\frac{1}{2}x_j(\lambda)\cdot S$$
      
  \end{proposition}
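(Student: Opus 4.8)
The plan is to leverage the two invariance properties of $\Lambda_\lambda$ highlighted after the main theorems—that it is stable under $W$ and that $\dim V_\mu = \dim V_{\sigma\mu}$—to generate linear relations among the entries $S(G,\lambda)_{ij} = \sum_{\mu\in\Lambda_\lambda}\mu_i\mu_j$, and then to solve them. First I would record how a simple reflection acts on fundamental-weight coordinates: since $s_k(\mu) = \mu - \mu(\check{\alpha}_k)\alpha_k$ and $\alpha_k = \sum_l A_{kl}\omega_l$, the $l$-th coordinate transforms as $s_k(\mu)_l = \mu_l - \mu_k A_{kl}$ (using $\mu(\check{\alpha}_k)=\mu_k$). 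Because $\Lambda_\lambda$ is $s_k$-invariant with multiplicities, each defining sum is unchanged under the substitution $\mu \mapsto s_k(\mu)$; performing this substitution, expanding the product, and cancelling the common term yields the family of relations
$$A_{kj}\,S(G,\lambda)_{ik} + A_{ki}\,S(G,\lambda)_{kj} = A_{ki}A_{kj}\,x_k(\lambda)$$
valid for all $i,j,k$, where I write $S(G,\lambda)_{kk} = x_k(\lambda)$ for the diagonal entry.

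The decisive step is to specialize $k=i$. Using $A_{ii}=2$ and $S(G,\lambda)_{ii}=x_i(\lambda)$, the relation collapses to $2\,S(G,\lambda)_{ij} = A_{ij}\,x_i(\lambda)$, that is
$$S(G,\lambda)_{ij} = \tfrac12\,A_{ij}\,x_i(\lambda).$$
This already expresses every entry through the Cartan matrix and the diagonal data. Two consequences remain. Symmetry of the left-hand side forces $A_{ij}x_i(\lambda) = A_{ji}x_j(\lambda)$; substituting $A_{ij} = \tfrac12(\alpha_i,\alpha_i)S_{ij}$ from the factorization $A=DS$ and cancelling the symmetric factor $S_{ij}$ gives $(\alpha_i,\alpha_i)x_i(\lambda) = (\alpha_j,\alpha_j)x_j(\lambda)$ whenever nodes $i,j$ are adjacent in the Dynkin diagram. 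Since the diagram of a simple group is connected, $(\alpha_i,\alpha_i)x_i(\lambda)$ is independent of $i$; in particular $x_i(\lambda)$ takes a common value on all long simple roots (where $(\alpha_i,\alpha_i)=2$), which is precisely the asserted independence of the choice of long root.

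Finally I would assemble the identity. Rewriting $S(G,\lambda)_{ij} = \tfrac12 A_{ij}x_i(\lambda) = \tfrac14(\alpha_i,\alpha_i)x_i(\lambda)\,S_{ij}$ and inserting the constant value $(\alpha_i,\alpha_i)x_i(\lambda) = 2x_j(\lambda)$ for a long root $\alpha_j$ produces $S(G,\lambda)_{ij} = \tfrac12 x_j(\lambda)\,S_{ij}$ for all $i,j$, i.e. $S(G,\lambda)=\tfrac12 x_j(\lambda)\,S$. The main obstacle I anticipate is organizational rather than conceptual: one must track the normalization factors $(\alpha_i,\alpha_i)$ consistently between $A$ and its symmetrization $S$, and confirm that the ``constant along the connected diagram'' argument genuinely transports the diagonal value to a long-root node—the unique place where the hypothesis $(\alpha_i,\alpha_i)=2$ enters. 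As an a priori cross-check I would add a remark that the form $(h,h')\mapsto\sum_{\mu}\mu(h)\mu(h')$ on $\ft$ is $W$-invariant, and, since the reflection representation of $W$ on $\ft$ is irreducible, Schur's lemma forces it to be a scalar multiple of the invariant form whose Gram matrix in the coroot basis is exactly $S$; this explains structurally why $S(G,\lambda)$ must be proportional to $S$, with the scalar then pinned down by a single long-root diagonal entry.
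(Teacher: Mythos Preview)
Your argument is correct and cleaner than the paper's. Both proofs exploit $W$-invariance of $\Lambda_\lambda$ via simple reflections, but the paper proceeds in stages: first a lemma showing $\sum_\mu\mu_i\mu_j=0$ when $A_{ij}=0$, then a lemma handling adjacent equal-length roots, and finally separate case-by-case computations for types $B_n,C_n,F_4$ and $G_2$ to treat the long/short interface. You instead extract the single identity $S(G,\lambda)_{ij}=\tfrac12 A_{ij}\,x_i(\lambda)$ directly from $s_i$-invariance and let the symmetry of $S(G,\lambda)$ together with connectedness of the Dynkin diagram do the rest uniformly; the paper's lemmas are special cases of this identity. What your route buys is a type-independent proof with no case analysis; what the paper's route buys is explicit numerical values (e.g.\ $\sum\mu_i^2=2x$ for short roots in types $B,C,F$, and the factor $3$ in $G_2$) that you recover only implicitly through the ratio $(\alpha_i,\alpha_i)/(\alpha_j,\alpha_j)$. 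Your closing Schur's-lemma remark---that $\sum_\mu\mu(h)\mu(h')$ is a $W$-invariant form on $\ft$, hence proportional to the form with Gram matrix $S$---is a genuinely different structural explanation that the paper does not give.
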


\begin{proof}
 The proof will rely on the fact that the set of weights $\Lambda_\lambda$ of $V_\lambda$ is invariant under the action of the Weyl Group on $\ft^*$, i.e. for $w\in W$, $w.\Lambda_\lambda =\Lambda_\lambda$. The following Lemma is true for all simple groups. The following two lemmas are sufficient to prove the simply-laced case but also hold for the non-simply laced cases.
 
 \begin{lemma}
 For a given simple group G, if the Cartan matrix entry $A_{ij}=0$, i.e the nodes representing the simple roots $\alpha_i$ and $\alpha_j$ are not connected on the associated Dynkin diagram, then 
 $$\sum\limits_{\mu\in\Lambda_\lambda} \mu_i\cdot \mu_j=0,$$
 where $\mu=(\mu_1,...,\mu_n)$.
 
 \end{lemma}
 
\begin{proof}
 Consider the simple reflection $s_i$ acting on a weight $\mu=(\mu_1,...\mu_n)\in\Lambda_\lambda$. Then
 $$s_i(\mu)=(\mu_1,...\mu_n)-((\mu_1,...\mu_n)(\check{\alpha_i}))(\alpha_i)$$
 Where $(\mu_1,...\mu_n)(\check{\alpha_i})=(\mu_1\omega_1+...\mu_n\omega_n)(\check{\alpha_i})=\mu_i$. Using the Cartan matrix to write the simple roots $\alpha_i$ in the fundamental weight basis gives $\alpha_i=(A_{i,1},...,A_{i,n})$. Then the above reflection yields
 
 $$s_i(\mu)=(\mu_1,...\mu_n)-\mu_i(A_{i,1},...,A_{i,n})=(\mu_1-\mu_iA_{i1},...,\mu_n-\mu_iA_{in})$$
 
 Now note that $A_{ii}=2$ and $A_{ij}=0$. So the $i^{th}$ coordinate of $s_i(\mu)$ is $[s_i(\mu)]_i=\mu_i-\mu_iA_{ii}=-\mu_i$ and the $j^{th}$ coordinate of $s_i(\mu)$ is $[s_i(\mu)]_j=\mu_j-\mu_iA_{ij}=\mu_j$. Thus we find that
 
 $$\sum\limits_{\mu\in\Lambda_\lambda} \mu_i \mu_j=\sum\limits_{s_i(\mu)\in\Lambda_\lambda} \mu_i \mu_j=\sum\limits_{\mu\in\Lambda_\lambda} [s_i(\mu)]_i\cdot[s_i(\mu)]_j =\sum\limits_{\mu\in\Lambda_\lambda} -\mu_i \mu_j,$$
 
 by invariance of $\Lambda_\lambda$ under $s_i$. Thus,  the result follows.
 \end{proof}
 
 \begin{lemma}
 If simple roots $\alpha_i$ and $\alpha_j$ of $G$ are connected via the Dynkin diagram and have the same length then 
 $$\sum\limits_{\mu\in\Lambda_\lambda} (\mu_i)^2=\sum\limits_{\mu\in\Lambda_\lambda} (\mu_j)^2.$$ Furthermore, 
  $$\sum\limits_{\mu\in\Lambda_\lambda} \mu_i\cdot \mu_j=-\frac{1}{2}\sum\limits_{\mu\in\Lambda_\lambda} \mu_i\cdot \mu_i$$
 
 \end{lemma}
 
\begin{proof}
 
 Let $\alpha_i$ and $\alpha_j$ be roots of the same length whose corresponding nodes on the Dynkin diagram are connected. So $A_{ij}=A_{ji}=-1$. Then as above with $\mu=(\mu_1,...\mu_n)\in\Lambda_\lambda$, we have that $s_i(\mu)=(\mu_1-\mu_iA_{i1},...,\mu_n-\mu_iA_{in})$. Now consider 
 $$s_js_i(\mu)=((\mu_1-\mu_iA_{i1})-(\mu_j-\mu_iA_{ij})A_{j1},...,(\mu_n-\mu_iA_{in})-(\mu_j-\mu_iA_{ij})A_{jn})$$
 Thus, $[s_js_i(\mu)]_i=(\mu_i-\mu_iA_{ii})-(\mu_j-\mu_iA_{ij})A_{ji}=-\mu_i-(\mu_j+\mu_i)(-1)=\mu_j$. Thus,
 $$\sum\limits_{\Lambda_\lambda} \mu_i\cdot \mu_i=\sum\limits_{\Lambda_\lambda} [s_js_i(\mu)]_i\cdot [s_js_i(\mu)]_i=\sum\limits_{\Lambda_\lambda} \mu_j\cdot \mu_j$$
 The second part of the lemma follows from the fact that $[s_i(\mu)]_j=\mu_j-\mu_iA_{ij}$ with $A_{ij}=-1$. It follows that
$$\sum\limits_{\Lambda_\lambda} \mu_j^2=\sum\limits_{\Lambda_\lambda}[s_i(\mu)]_j^2=\sum\limits_{\Lambda_\lambda} (\mu_j+\mu_i)^2$$
Thus, $\sum\limits_{\Lambda_\lambda}\mu_i\cdot \mu_i=-2\sum\limits_{\Lambda_\lambda}\mu_i\cdot \mu_j$
 \end{proof}

\vspace{10 mm}

With the above results we see that for groups of simply-laced type that
$$\begin{pmatrix}
c_1(t)\\ \vdots \\ c_n(t)
\end{pmatrix}
=\frac{2}{\sum\limits_{\mu\in\Lambda_\lambda} \mu_i^2} A^{-1}
\begin{pmatrix}
\sum\limits_{\mu\in\Lambda_\lambda}\mu_1 e^\mu(t)\\
\vdots\\
\sum\limits_{\mu\in\Lambda_\lambda}\mu_n e^\mu(t) 
\end{pmatrix}$$
The inverses of the Cartan matrices for the simply laced root systems are in the Appendix.

\vspace{10 mm}

\subsection{Non-simply laced groups}
Recall that the roots systems of simple groups of type $B_n,C_n,G_2,F_4$ contain long and short simple roots. Our convention will be the same as in Bourbaki [Bo]. That is, for $B_n$ that $\alpha_1,...,\alpha_{n-1}$ are the long roots and $\alpha_n$ is short, for $C_n$ that $\alpha_1,...\alpha_{n-1}$ are short and $\alpha_n$ is long, for $G_2$ that $\alpha_1$ is short and $\alpha_2$ is long, and for $F_4$ that the first and second are long and that the third and fourth are short. 

\subsubsection{G of type B,C or F}
\begin{proposition}

 Let $G$ be a rank n simple group of types $B_n$, $C_n$, or $F_4$. For any long root $\alpha_i$, set $x=\sum\limits_{\Lambda_\lambda} \mu_i^2$.  If $\alpha_j$ is a short root, then $\sum\limits_{\mu\in\Lambda_\lambda}\mu_j^2=2x$, where $x$ is defined in \S 4.  If either or both of $\alpha_i$ and $\alpha_j$ are short, then $\sum\limits_{\mu\in\Lambda_\lambda}\mu_i\mu_j=-x$
 
\end{proposition}
\begin{proof}

Note that if $\alpha_i$ and $\alpha_j$ are both long roots, connected via the Dynkin diagram, then $A_{ij}=A_{ji}=-1$ So the same argument as in Lemma 4.3 shows that
$$\sum\limits_{\Lambda_\lambda}\mu_i^2=\sum\limits_{\Lambda_\lambda}\mu_j^2,$$
and that $\sum\limits_{\Lambda_\lambda}\mu_i\mu_j=-\frac{1}{2}\sum\limits_{\Lambda_\lambda}\mu_i^2$. The same is true for the short roots as $A_{ij}=A_{ji}=-1$ for connected short roots. So we need to show that if $\alpha_i$ and $\alpha_j$ are short and long roots respectively and connected via the Dynkin diagram, then $\sum\limits_{\Lambda_\lambda}\mu_i^2= 2x$, and that $\sum\limits_{\Lambda_\lambda}\mu_i\mu_j=-x$. To show this we first note that $A_{ij}=-1$ and $A_{ji}=-2$ and then compare $[s_i(\mu)]_i,[s_j(\mu)]_j,[s_j(\mu)]_i$ and $[s_i(\mu)]_j$. Note that $[s_i(\mu)]_i=-\mu_i$ and $s_j(\mu_j)=-\mu_j$ as before. Also, $[s_i(\mu)]_j=\mu_j-\mu_iA_{i,j}=\mu_j+\mu_i$ and $[s_j(\mu)]_i=\mu_i-\mu_jA_{ji}=\mu_i+2\mu_j$. Thus, we have that 

$$\sum\limits_{\Lambda_\lambda}\mu_i\mu_j=\sum\limits_{\Lambda_\lambda}[s_j(\mu)]_i\cdot [s_j(\mu)]_j=\sum\limits_{\Lambda_\lambda}(\mu_i+2\mu_j)(-\mu_j) =\sum\limits_{\Lambda_\lambda}-\mu_i\mu_j-2\mu_j^2$$

Thus $\sum\limits_{\Lambda_\lambda}\mu_i\mu_j=-\sum\limits_{\Lambda_\lambda}\mu_j^2=-x$. Applying, $s_i$ to $\mu$ gives

$$\sum\limits_{\Lambda_\lambda}\mu_i\mu_j=\sum\limits_{\Lambda_\lambda}[s_i(\mu)]_i\cdot[s_i(\mu)]_j=\sum\limits_{\Lambda_\lambda}-\mu_i\mu_j-\mu_i^2$$ 
Thus, $\sum\limits_{\Lambda_\lambda}\mu_i^2=2x$
\end{proof}

\vspace{10 mm}

So it follows that with $x=\sum\limits_{\Lambda_\lambda}\mu_j^2$, where $\alpha_j$ is a long root, then

$$S(B_n,\lambda)=\frac{x}{2}
\begin{pmatrix}
2&-1\\-1&2&-1\\&-1&\ddots\\&&&2&-1&\\&&&-1&2&-2\\&&&&-2&4
\end{pmatrix},
S(C_n,\lambda)=\frac{x}{2}
\begin{pmatrix}
4&-2\\-2&4&-2\\&-2&\ddots\\&&&4&-2&\\&&&-2&4&-2\\&&&&-2&2
\end{pmatrix}$$
$$S(F_4,\lambda)=\frac{x}{2}
\begin{pmatrix}
2&-1&0&0\\-1&2&-2&0\\0&-2&4&-2\\0&0&-2&4
\end{pmatrix}$$
We give inverses of these matrices in the appendix.

\subsubsection{G of type $G_2$}

\vspace{10 mm}

Let $\alpha_1$ be the short root, and $\alpha_2$ the long root of $G_2$.
\begin{proposition}
 $\sum\limits_{\Lambda_\lambda}\mu_1^2=-2\sum\limits_{\Lambda_\lambda}\mu_1\mu_2
=3\sum\limits_{\Lambda_\lambda}\mu_2^2$
\end{proposition}
\begin{proof}

Let $\mu=(\mu_1,\mu_2)\in\Lambda_\lambda$. Then since 
$A=\begin{pmatrix}
2&-1\\-3&2
\end{pmatrix}$, we find that $s_1(\mu)=(-\mu_1,\mu_1+\mu_2)$ and that $s_2(\mu)=(\mu_1+3\mu_2,-\mu_2)$. So,
$$\sum\limits_{\Lambda_\lambda}\mu_1^2=\sum\limits_{\Lambda_\lambda}(\mu_1+3\mu_2)^2$$
from which it follows that $\sum\limits_{\Lambda_\lambda}\mu_1\mu_2=-\frac{3}{2}\sum\limits_{\Lambda_\lambda}\mu_2^2$. Additionally, we have that 
$$\sum\limits_{\Lambda_\lambda}\mu_2^2=\sum\limits_{\Lambda_\lambda}(\mu_1+\mu_2)^2$$
from which we can see that $\sum\limits_{\Lambda_\lambda}\mu_1^2=-2\sum\limits_{\Lambda_\lambda}\mu_1\mu_2
=3\sum\limits_{\Lambda_\lambda}\mu_2^2$. Thus,
$$S(G_2,\lambda)=\frac{1}{2}\sum\limits_{\Lambda_\lambda}\mu_2^2
\begin{pmatrix}
6&-3\\-3&2
\end{pmatrix}$$
\end{proof}
In particular, we can solve for $c_1(t)$ and $c_2(t)$ as 
$$\begin{pmatrix}
c_1(t)\\c_2(t)
\end{pmatrix}=
(S(G_2,\lambda)^{-1}\begin{pmatrix}
\sum\limits_{\Lambda_\lambda}\mu_1e^\mu(t)\\\sum\limits_{\Lambda_\lambda}\mu_2e^\mu(t)
\end{pmatrix}$$
then, letting $x=\sum\limits_{\Lambda_\lambda}\mu_2^2$ we have that $S^{-1}(G,\lambda)=
\frac{2}{3x}\begin{pmatrix}
2&3\\3&6
\end{pmatrix}$. Thus,
$$c_1(t,\lambda)=\frac{2}{3x}\sum\limits_{\Lambda_\lambda}(2\mu_1+3\mu_2)e^\mu(t)$$
$$c_2(t,\lambda)=\frac{2}{3x}\sum\limits_{\Lambda_\lambda}(3\mu_1+6\mu_2)e^\mu(t)$$. 
\end{proof}
\section{Example($G=C_n$,Defining Reresentation)}
Consider $G=Sp(2n,\mathbb{C})$=\{$A\in GL(2n)|M=A^tMA$\} where $M=\begin{pmatrix}
0&I_n\\-I_n&0
\end{pmatrix}$ where $I_n$ is the $n\times n$ identity matrix, and $\mathfrak{sp}(2n,\mathbb{C})$=\{$X\in \mathfrak{gl}(2n)|X^tM+MX=0$\}. 

Let $\lambda=\omega_1$,the defining representation. Then we have that $\Lambda_\lambda$=\{$\pm\omega_1$ and $\pm(\omega_i-\omega_{i+1})$ for $1\leq i\leq n-1$\}. So, $x=\sum\limits_{\Lambda_\lambda} \mu_n^2=2$. Let $T=diag\{t_1,...,t_n,t_1^{-1},...,t_n^{-1}\}$. The simple roots are $\alpha_i=\epsilon_i-\epsilon_{i+1}$ for $1\leq i\leq n-1$ and $\alpha_n=2\epsilon_n$. The simple coroots in $\ft$ are then $\check{\alpha_i}=E_{i}-E_{i+1}-E_{n+i}+E_{n+i+1}$ for $1\leq 1\leq n-1$ and $\check{\alpha}_n$=$E_n-E_{2n}$ where $E_i$ is the diagonal matrix with a 1 in the $i^{th}$ slot and 0's elsewhere [FH]. In the orthogonal basis for $\ft$, $\omega_i=\epsilon_1+...+\epsilon_i$. Thus, the character $e^\mu(t)$ is given by $e^\mu(t)=t_1^{\mu_1+...\mu_n}\cdot t_2^{\mu_2+...+\mu_n}\cdot ...\cdot t_n^{\mu_n}$. Then, we have that
$$\begin{pmatrix}
c_1(t)\\ \vdots \\ c_n(t)
\end{pmatrix}=
\frac{1}{2}\begin{pmatrix}
1 & 1 & 1 &...& 1\\
1 & 2 & 2 &...&2\\
1& 2 & 3 &...&3\\
...&...&...&...&...\\
1 & 2 & 3 &... & n
\end{pmatrix}
\begin{pmatrix}
t_1-t_1^{-1}-t_2+t_2^{-1}\\
t_2-t_2^{-1}-t_3+t_3^{-1}\\
\vdots\\ t_{n-1}-t_{n-1}^{-1}-t_n+t_n^{-1}\\
t_n-t_n^{-1}
\end{pmatrix}$$
which gives 
$$\begin{pmatrix}
c_1(t)\\ \vdots \\ c_n(t)
\end{pmatrix}=\frac{1}{2}\begin{pmatrix}
t_1-t_1^{-1}\\ \vdots \\ t_{n-1}-t_{n-1}^{-1} \\ t_1-t_1^{-1}+...+t_n-t_n^-{1}
\end{pmatrix}$$
Thus, $$\theta_\lambda(t)=c_1(t)\check{\alpha}_1+...+c_n(t)\check{\alpha}_n=diag(\frac{t_1-t_1^{-1}}{2},...,\frac{t_n-t_n^{-1}}{2},-\frac{t_1-t_1^{-1}}{2},...,-\frac{t_n-t_n^{-1}}{2}).$$
Note that this is equivalent to the Cayley transform as in \S 6 of [Kum]. Similiar results hold for $\theta_{\omega_{1}}(t)$ for the standard maximal tori of $SO(2n,\mathbb{C})$ and $SO(2n,\mathbb{C})$.

\appendix
\section{inverse of the cartan matrices and their symmetrizations S}
 The the inverses of the Cartan matrices for $A_n,D_n,E_6,E_7,E_8$ respectively have the form (as in [Rosenfeld]))
$$\frac{1}{n+1}
\begin{pmatrix}
n&n-1&n-2&...&3&2&1\\
n-1&2(n-1)&2(n-3)&...&6&4&2\\
n-2&2(n-2)&3(n-2)&...&9&6&3\\
...&...&...&...&...&...&...\\
2&4&6&...&(2n-2)&2(n-1)&n-1\\
1&2&3&...&n-2&n-1&n
\end{pmatrix}
,$$
$$\begin{pmatrix}
1&1&1&...&1&\frac{1}{2}&\frac{1}{2}\\
1&2&2&...&2&1&1\\
1&2&3&...&3&\frac{3}{2}&\frac{3}{2}\\
...&...&...&...&...&...&...\\
1&2&3&...&n-2&\frac{n-2}{2}&\frac{n-2}{2}\\
\frac{1}{2}&1&\frac{3}{2}&...&\frac{n-2}{2}&\frac{n}{4}&\frac{n-2}{4}\\
\frac{1}{2}&1&\frac{3}{2}&...&\frac{n-2}{2}&\frac{n-2}{4}&\frac{n}{4}
\end{pmatrix}$$

$$\begin{pmatrix}
\frac{4}{3}&1&\frac{5}{3}&2&\frac{4}{3}&\frac{2}{3}\\
1&2&2&3&2&1\\
\frac{5}{3}&2&\frac{10}{3}&4&\frac{8}{3}&\frac{4}{3}\\
2&3&4&6&4&2\\
\frac{4}{3}&2&\frac{8}{3}&4&\frac{10}{3}&\frac{5}{3}\\
\frac{2}{3}&1&\frac{4}{3}&2&\frac{5}{3}&\frac{4}{3}\\
\end{pmatrix},
\begin{pmatrix}
2&2&3&4&3&2&1\\
2&\frac{2}{2}&4&6&\frac{9}{2}&3&\frac{3}{2}\\
3&4&6&8&6&4&2\\
4&6&8&12&9&6&3\\
3&\frac{9}{2}&6&9&\frac{15}{2}&5&\frac{5}{2}\\
2&3&4&6&5&4&2\\
1&\frac{3}{2}&2&3&\frac{5}{2}&2&\frac{3}{2}
\end{pmatrix},
\begin{pmatrix}
4&5&7&10&8&6&4&2\\
5&8&10&15&12&9&6&3\\
7&10&14&20&16&12&8&4\\
10&15&20&30&24&18&12&6\\
8&12&16&24&20&15&10&5\\
6&9&12&18&15&12&8&4\\
4&6&8&12&10&8&6&3\\
2&3&4&6&5&4&3&2
\end{pmatrix}$$
The inverse of the matrix $S$ for types $C_n,B_n,G_2,F_4$ have the form
$$\frac{1}{2}\begin{pmatrix}
1 & 1 & 1 &...& 1\\
1 & 2 & 2 &...&2\\
1& 2 & 3 &...&3\\
...&...&...&...&...\\
1 & 2 & 3 &... & n
\end{pmatrix},\frac{1}{2}
\begin{pmatrix}
2&2&2&...&2&1\\
2&4&4&...&4&2\\
2&4&6&...&6&3\\
...&...&...&...&...&...\\
2&4&6&...&2(n-1)&n-1\\
1&2&3&...&n-1&2
\end{pmatrix},
\begin{pmatrix}
\frac{2}{3}&1\\1&2
\end{pmatrix},
\begin{pmatrix}
2&3&2&1\\3&6&4&2\\2&4&3&\frac{3}{2}\\1&2&\frac{3}{2}&1
\end{pmatrix}$$
\bibliographystyle{plain}
\def\noopsort#1{}

\end{document}